\documentclass[12pt]{article}

\usepackage{amsmath,amssymb,amsthm}

\newtheorem{theorem}{Theorem}
\newtheorem{lemma}{Lemma}
\newtheorem{corollary}{Corollary}

\newtheorem{claim}{Claim}

\newcommand{\per}{\operatorname{per}}

\title{On the Number of Hamiltonian Cycles in a Boolean Cube}
\author{A.\,L.\,Perezhogin \and V.\,N.\,Potapov}
\date{}

\begin{document}

\maketitle

\begin{abstract}
It is shown that, as $n\to\infty$, the logarithm of the number of
decompositions into cycles of the $n$-dimensional Boolean cube $E^n$
is
\[
2^n(\ln n-1+o(1)),
\]
and the logarithm of the number of Hamiltonian cycles in $E^n$ is at least
\[
2^{n-1}(\ln n-1+o(1)).
\]
It is proved that, in $E^n$, every perfect matching whose edges belong
to at most $k$ directions can be extended to a Hamiltonian cycle for
every $n\geq n_0(k)$.
\end{abstract}

\section*{Introduction}

Let $E^n$ denote the $n$-dimensional Boolean cube, i.e., the graph whose
vertices are binary vectors of length $n$, where the vertices
$x=(\alpha_1,\ldots,\alpha_n)$ and
$y=(\beta_1,\ldots,\beta_n)$ are adjacent if and only if their Hamming
distance
\[
\rho(x,y)=\sum_{i=1}^n(\alpha_i\mathbin{\oplus}\beta_i)=1,
\]
where $\mathbin{\oplus}$ denotes addition modulo $2$.

The problem of determining the number $h_n$ of Hamiltonian cycles in
$E^n$ was posed in \cite{Gil} and considered in
\cite{Abb,Di,Do,Mol}.  The best known lower bound for $h_n$ as
$n\to\infty$ (see \cite{Mol}) is
\[
\ln h_n \geq 2^{n-1}(c+o(1)),
\qquad c\approx 1.42.
\]
The best known upper bound for $h_n$ was obtained in \cite{Di}:
\[
\ln h_n\leq
2^n\left(\ln n-\frac{\ln 2}{2}+o(1)\right).
\]

In the present paper, we establish the following asymptotic bounds
for $h_n$ as $n\to\infty$:
\[
2^{n-1}(\ln n-1+o(1))
\leq
\ln h_n
\leq
2^n(\ln n-1+o(1)).
\]

In obtaining these bounds, we use the asymptotic formulas, proved
below, for the logarithm of the number $m_n$ of perfect matchings and
the logarithm of the number $f_n$ of $2$-factors (decompositions into
simple cycles) in $E^n$:
\[
\ln m_n=2^{n-1}(\ln n-1+o(1)),
\]
and
\begin{equation}
\ln f_n=2^n(\ln n-1+o(1)).
\label{1}
\end{equation}
These asymptotic formulas are obtained from known results on permanents
of square $(0,1)$-matrices; see \cite{Bregman,Minc,Schrijver}.

Notice that the known upper bounds for $h_n$ follow from upper bounds
for $f_n$ and the inequality
\[
h_n\leq f_n.
\]
This method of obtaining upper bounds for $h_n$ appears to be exhausted,
since estimate~\eqref{1} is asymptotically exact.

In \cite{Kreweras}, the conjecture was stated that every perfect
matching in $E^n$ can be extended to a Hamiltonian cycle. It was shown
there that this assertion is true in $E^n$ for $n\leq4$.
Below, we prove that every perfect matching in $E^n$ whose edges belong
to at most $k$ directions can be extended to a Hamiltonian cycle for
every
\[
n\geq n_0(k).
\]

\section{Preliminaries}

We introduce some notation. Let $G$ be an arbitrary graph.
We denote by $V(G)$ the set of vertices and by $E(G)$ the set of
edges of $G$. By $k(G)$ we denote the number of connected components
of $G$.

An arbitrary set of pairwise nonadjacent edges of $G$ is called a
\emph{matching} in $G$. A matching $A\subset E(G)$ is called
\emph{perfect} if every vertex of $G$ is incident with exactly one
edge of $A$. An arbitrary collection of pairwise vertex-disjoint
simple cycles in $G$ that covers all vertices of $G$ is called a
\emph{2-factor} of $G$. A \emph{Hamiltonian cycle} in $G$ is a
2-factor consisting of a single cycle. Clearly, two disjoint perfect
matchings in $G$ generate a 2-factor.

Let $G$ be a bipartite graph with $2N$ vertices, and suppose that the
vertices in each part are labeled by the numbers
$1,2,\ldots,N$. The \emph{adjacency matrix} of $G$ is the square
$(0,1)$-matrix
\[
A(G)=(a_{ij})
\]
of order $N$, where $a_{ij}=1$ if the $i$th vertex of the first part
is adjacent to the $j$th vertex of the second part, and $a_{ij}=0$
otherwise.

The \emph{permanent} of the matrix $A(G)$ is defined by
\[
\per A(G)
=
\sum_{\tau\in S_N}\prod_{i=1}^N a_{i\tau(i)},
\]
where $S_N$ is the set of all permutations of $\{1,\ldots,N\}$.
It is known (see, for example, \cite{Minc}) that the number of
perfect matchings in a bipartite graph $G$ is equal to the permanent
of its adjacency matrix $A(G)$.

L.~M.~Bregman \cite{Bregman} proved the following upper bound for the
permanent of a square $(0,1)$-matrix $B$ of order $N$, conjectured by
H.~Minc \cite{Minc}:
\[
\per B
\leq
\prod_{i=1}^N (r_i!)^{1/r_i},
\]
where $r_i$ is the sum of the entries in the $i$th row of $B$.
It follows immediately from this inequality that
\begin{equation}
\per A(G)
\leq
(k!)^{\frac{N}{k}}
\label{2}
\end{equation}
for the number of distinct perfect matchings in a $k$-regular
($k$-uniform) bipartite graph $G$ with $2N$ vertices.

A square matrix with nonnegative entries is called
\emph{doubly stochastic} if the sum of the entries in every row and
every column is equal to $1$.

In 1980, G.~P.~Egorychev \cite{Egorychev} and
D.~I.~Falikman \cite{Falikman} proved the van der Waerden conjecture,
which states that the permanent of an arbitrary doubly stochastic
matrix of order $N$ containing at least two distinct entries is
greater than the permanent of the doubly stochastic matrix of order
$N$ whose entries are all equal.

The permanent of the latter matrix is
\[
\frac{N!}{N^N}.
\]
If every entry of the matrix $A(G)$ is divided by $k$, we obtain a
doubly stochastic matrix. Therefore, for $k<N$,
\begin{equation}
\per A(G)
>
(N!)\left(\frac{k}{N}\right)^N.
\label{3}
\end{equation}

In \cite{Schrijver} it was proved that if $G$ is a $k$-regular
bipartite graph with $2N$ vertices and $N\geq k$, then $G$ contains at
least
\begin{equation}
\left(
\frac{(k-1)^{k-1}}{k^{k-2}}
\right)^N
\label{4}
\end{equation}
distinct perfect matchings.

From the refined Stirling formula
\begin{equation}
\left(\frac{N}{e}\right)^N
\sqrt{2\pi N}\,e^{1/(12N+1)}
\leq N!
\leq
\left(\frac{N}{e}\right)^N
\sqrt{2\pi N}\,e^{1/(12N)}
\label{5}
\end{equation}
and the inequality
\[
\left(1-\frac{1}{k}\right)^{k-1}>\frac{1}{e},
\]
it follows that
\[
\lim_{N\to\infty}
\frac{1}{N}
\ln\left(
N!\left(\frac{k}{N}\right)^N
\right)
=
\ln k-1
<
\]
\[
\lim_{N\to\infty}
\frac{1}{N}
\ln\left(
\left(
\frac{(k-1)^{k-1}}{k^{k-2}}
\right)^N
\right)
=
\ln k-\ln\left(1-\frac{1}{k}\right)^{k-1}.
\]

Thus, for fixed $k$ and $N\to\infty$, estimate~\eqref{4} is better
than estimate~\eqref{3}.

\section{Upper and Lower Bounds for the Number of Perfect Matchings
and 2-Factors in $E^n$}

Let $M_n$ denote the set of perfect matchings in $E^n$, and let
$M_n(A)$ denote the set of perfect matchings in $E^n$ that are
disjoint from a perfect matching $A$.

Clearly, $E^n$ is a bipartite $n$-regular graph with $2^n$ vertices.

\begin{theorem}
For $n\geq 2$,
\[
2^{n-1}\left(
\ln n-1\right)
\leq
\ln |M_n|
\leq
2^{n-1}\left(
\ln n-1+
\frac{1}{2n}
\left(\ln(2\pi n)+\frac{1}{6n}\right)
\right).
\]
\end{theorem}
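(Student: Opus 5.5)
Both bounds come from applying the permanent estimates of the Preliminaries to the bipartite graph $E^n$, which is $n$-regular with $2N$ vertices where $N=2^{n-1}$. Since $|M_n|=\per A(E^n)$, the upper bound will come from Bregman's inequality in the form \eqref{2}, and the lower bound from the van der Waerden bound \eqref{3}. The remaining work is to turn these into the stated logarithmic forms using the Stirling estimate \eqref{5}.

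\emph{Upper bound.} By \eqref{2} with $k=n$,
\[
\ln|M_n|\le \frac{N}{n}\ln n! = \frac{2^{n-1}}{n}\ln n!.
\]
Applying the right-hand inequality of \eqref{5}, $\ln n!\le n\ln n-n+\frac12\ln(2\pi n)+\frac{1}{12n}$. Dividing by $n$ gives
\[
\frac1n\ln n!\le \ln n-1+\frac{1}{2n}\left(\ln(2\pi n)+\frac{1}{6n}\right),
\]
which is exactly the claimed upper bound after multiplying by $2^{n-1}$.

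\emph{Lower bound.} I would use \eqref{3} with $k=n$ and $N=2^{n-1}$. The hypothesis $k<N$ holds for $n\ge 3$, and for $n=2$ one checks directly that $|M_2|=2\ge 1=e^{2(\ln2-1)+\ldots}$; in fact $2(\ln 2-1)<0$, so the bound is trivial there. Inequality \eqref{3} gives
\[
\ln|M_n|\ge \ln N!+N\ln n-N\ln N.
\]
The left-hand inequality of \eqref{5} gives $\ln N!\ge N\ln N-N+\frac12\ln(2\pi N)\ge N\ln N-N$. Hence $\ln|M_n|\ge N(\ln n-1)=2^{n-1}(\ln n-1)$.

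Nothing here is a genuine obstacle. The only care needed is to check the small case $n=2$, where the hypothesis $k<N$ of \eqref{3} fails (or to argue that \eqref{3} still gives $\ge$ with equality allowed), and to verify that the constants in the upper bound match \eqref{5} exactly.
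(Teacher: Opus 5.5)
Your proof is correct. The upper bound is exactly the paper's argument, but your lower bound uses a different estimate.

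\textbf{Paper's route.} It applies Schrijver's bound \eqref{4}, which gives $\ln|M_n|\ge 2^{n-1}\bigl(\ln n+(n-1)\ln(1-\frac1n)\bigr)$. It then needs the series inequality \eqref{6} to show $(n-1)\ln(1-\frac1n)\ge -1+\frac1{n^2}$.

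\textbf{Your route.} You apply the van der Waerden bound \eqref{3} together with the left half of \eqref{5}. This gives $\ln|M_n|\ge 2^{n-1}(\ln n-1)+\frac12\ln(2\pi 2^{n-1})$ in one line, with no need for \eqref{6}.

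\textbf{Comparison.} Your route is a bit more economical. It also shows that the paper's closing remark undersells \eqref{3}: that bound yields the stated inequality for every $n\ge 2$, not just asymptotically. The paper's route gives a larger surplus, $2^{n-1}/n^2$, against your logarithmic $\frac12\ln(2\pi 2^{n-1})$. This matches the observation in the Preliminaries that \eqref{4} beats \eqref{3} when $N$ is large relative to $k$. The theorem needs neither surplus.

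\textbf{Two minor points.} First, the hypothesis $k<N$ in \eqref{3} only serves to make the inequality strict. Since $\per(A(G)/k)\ge N!/N^N$ holds for every doubly stochastic matrix, the case $n=2$ needs no separate check. There $E^2=K_{2,2}$ and equality $|M_2|=2$ holds. Second, your aside ``$2\ge 1=e^{2(\ln 2-1)+\ldots}$'' is garbled, since $e^{2(\ln 2-1)}=4/e^2$. The conclusion you draw from $2(\ln 2-1)<0$ is nonetheless right.
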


\begin{proof}
The upper bound for $\ln|M_n|$ follows from \eqref{2} and \eqref{5}.
The lower bound for $\ln|M_n|$ follows from \eqref{4} and the inequality
\begin{equation}
\ln\left(1-\frac{1}{k}\right)=-\sum\limits_{i=1}^\infty\frac{1}{ik^i}
\geq
-\frac{1}{k}-\frac{1}{k^2},
\label{6}
\end{equation}
valid for $k\geq 2$.

Notice that the asymptotic estimate
\[
\ln|M_n|
\geq
2^{n-1}(\ln n-1+o(1)),
\qquad n\to\infty,
\]
can also be obtained from \eqref{3}.
\end{proof}

\begin{claim}
Let $A$ be an arbitrary perfect matching in $E^n$. Then, for $n\geq2$,
\[
2^{n-1}\left(
\ln(n-1)-1
\right)
\leq
\ln|M_n(A)|
\]
\[
\leq
2^{n-1}\left(
\ln(n-1)-1+
\frac{1}{2(n-1)}
\left(
\ln(2\pi(n-1))+\frac{1}{6(n-1)}
\right)
\right).
\]
\end{claim}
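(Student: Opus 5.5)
The plan is to reduce the claim to the proof of Theorem~1, applied to a different regular bipartite graph. Let $G_A$ be the graph obtained from $E^n$ by deleting the edges of the perfect matching $A$. Every vertex of $E^n$ has degree $n$ and lies on exactly one edge of $A$. Hence $G_A$ is an $(n-1)$-regular graph on $2^n$ vertices. It is still bipartite, with the same two parts of size $N=2^{n-1}$ (vectors of even and odd weight). A perfect matching of $E^n$ is disjoint from $A$ exactly when it is a perfect matching of $G_A$. So $|M_n(A)|=\per A(G_A)$, and the claim becomes a statement about perfect matchings in a $k$-regular bipartite graph with $k=n-1$ and $N=2^{n-1}$.

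For the upper bound, apply Bregman's inequality in the form \eqref{2} with $k=n-1$:
\[
\ln|M_n(A)|\le \frac{N}{k}\ln k!.
\]
Then bound $\ln k!$ by the right-hand inequality of \eqref{5}, which gives
\[
\frac{1}{k}\ln k!\le \ln k-1+\frac{1}{2k}\Bigl(\ln(2\pi k)+\frac{1}{6k}\Bigr).
\]
Multiplying by $N=2^{n-1}$ and substituting $k=n-1$ yields exactly the stated upper bound. This is the computation already done for Theorem~1 with $n$ replaced by $n-1$.

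For the lower bound, use Schrijver's bound \eqref{4} with $k=n-1$. Its hypothesis $N\ge k$ holds, since $2^{n-1}\ge n-1$. This gives
\[
\ln|M_n(A)|\ge N\bigl(\ln k+(k-1)\ln(1-1/k)\bigr).
\]
Inequality \eqref{6} gives $(k-1)\ln(1-1/k)\ge -(k-1)(1/k+1/k^2)=-1+1/k^2\ge -1$ for $k\ge2$, so $\ln|M_n(A)|\ge 2^{n-1}(\ln(n-1)-1)$.

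The only delicate point is the degenerate case $k=1$, that is, $n=2$, where \eqref{6} does not apply. Here $G_A$ is itself a perfect matching, so $|M_2(A)|=1$. The lower bound $2(\ln1-1)=-2\le0$ holds trivially. The upper bound is also fine, because \eqref{2} gives $\per\le1$ directly and the correction term is positive. Apart from this case and the bookkeeping of constants, no real obstacle is expected.
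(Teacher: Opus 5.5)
Your proposal is correct and matches the paper's proof: delete $A$ to get an $(n-1)$-regular bipartite graph on $2^n$ vertices, then apply Bregman's bound \eqref{2} with Stirling \eqref{5} for the upper estimate and Schrijver's bound \eqref{4} with \eqref{6} for the lower one. Your separate check of $n=2$ is a careful addition the paper omits, though for the upper bound there it is not enough that the correction term is positive. It must be at least $1$ to absorb the $-1$; this holds because $\ln(2\pi)+\tfrac16>2$, or simply because \eqref{5} is already valid for $k=1$.
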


\begin{proof}
Delete from $E^n$ the edges of the matching $A$. The resulting graph
is a bipartite $(n-1)$-regular graph. Therefore, the estimates for
$\ln|M_n(A)|$ follow from \eqref{2} and \eqref{4}.
\end{proof}

Let $F_n$ denote the set of 2-factors in $E^n$. We denote by
$F_n(i)\subset F_n$ the set of 2-factors, each of which has $i$
connected components.

\begin{claim}
For every $i$, $1\leq i\leq 2^{n-2}$,
\[
|F_n(i)|
\leq
n^{2^n-i}2^{2^n}.
\]
\end{claim}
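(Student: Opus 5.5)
The plan is an injective encoding argument. To every 2-factor $C\in F_n(i)$ I will assign a pair $(T,w)$ such that $C$ can be reconstructed from the pair. Here $T\subseteq V(E^n)$ is a set of vertices and $w$ is a word of length $2^n-i$ over the alphabet $\{1,\ldots,n\}$ of directions. There are at most $2^{2^n}$ choices of $T$ and at most $n^{2^n-i}$ choices of $w$, so this gives $|F_n(i)|\leq n^{2^n-i}2^{2^n}$.

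\textbf{Canonical traversal.} Fix a linear order on the vertices of $E^n$, say lexicographic. Given $C$ with cycles $C_1,\ldots,C_i$, I process the cycles one at a time.
\begin{enumerate}
\item At each stage, let $v$ be the smallest vertex not yet covered by the cycles already processed. It lies on a unique cycle $C_j$ of $C$, of length $L_j$.
\item Orient $C_j$ in a fixed way, for example so that the successor of $v$ is the smaller of its two cycle-neighbours.
\item Traverse $C_j$ from $v$ and record the direction (coordinate index) of each of the first $L_j-1$ edges. These edges lead from $v$ to the predecessor $u_j$ of $v$ on the cycle.
\item Put $u_j$ into $T$.
\item Append the recorded $L_j-1$ directions to $w$.
\end{enumerate}
Since the cycles partition the $2^n$ vertices, the total length of $w$ is $\sum_j (L_j-1)=2^n-i$.

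\textbf{Decoding.} Given $(T,w)$, start at the smallest uncovered vertex $v$. Read letters of $w$ one at a time and move along the indicated coordinate. Stop as soon as the current vertex belongs to $T$, then close the cycle with the edge back to $v$.

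The point to verify is that the stopping rule fires exactly at $u_j$ and not earlier. This holds because $T$ contains exactly one vertex of each cycle, namely $u_j$, which is the last vertex visited on $C_j$ before returning to $v$. So the decoder reproduces $C_j$ and uses exactly $L_j-1$ letters. Repeating this reconstructs all $i$ cycles in order, which proves injectivity.

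This step is where care is needed. A naive encoding records the successor direction of every vertex, which costs $n^{2^n}$ and is too large. Marking the start vertices does not tell the decoder when each cycle closes. Marking the last vertices instead, together with choosing the next start canonically, resolves both issues at the price of the factor $2^{2^n}$. No earlier result of the paper is needed beyond the definitions. The hypothesis $i\leq 2^{n-2}$ only reflects that each cycle has length at least $4$.
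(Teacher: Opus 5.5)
Your proof is correct and follows essentially the paper's argument. The paper also deletes one edge per cycle, marks one endpoint of each resulting path, and spends one direction on each of the remaining $2^n-i$ vertices, obtaining $n^{2^n-i}\binom{2^n}{i}\le n^{2^n-i}2^{2^n}$. The only difference is bookkeeping: the paper assigns to each unmarked vertex the direction of its path edge toward the marked endpoint, which determines the edge set directly, so it needs neither your canonical ordering of the cycles nor your stopping rule.
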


\begin{proof}
Since the shortest simple cycle in $E^n$ has length $4$, every
2-factor $U\in F_n(i)$ consists of $i$ simple cycles, where
\[
1\leq i\leq 2^{n-2}.
\]
By deleting one edge from each cycle, we obtain a subgraph $U'$
that is a decomposition of the hypercube $E^n$ into $i$ pairwise
vertex-disjoint paths.

Clearly, a 2-factor $U\in F_n(i)$ is uniquely determined by the
subgraph $U'$.

To specify an arbitrary decomposition of $E^n$ into $i$ pairwise
vertex-disjoint simple paths, it is sufficient to choose $i$ vertices
as endpoints of the paths (one endpoint for each path) and to specify
the directions of the edges incident with all remaining vertices of
the Boolean cube $E^n$.

Consequently, the number of distinct decompositions of the Boolean
cube $E^n$ into $i$ pairwise vertex-disjoint paths is at most
\[
n^{2^n-i}\binom{2^n}{i}.
\]
Therefore,
\[
|F_n(i)|
\leq
n^{2^n-i}2^{2^n}.
\]
\end{proof}

\begin{theorem}
For $n\geq2$,
\[
2^n\left(
\ln(n-1)-1-\frac{4}{\ln n}
\right)
\leq
\ln|F_n|
\]
\[
\leq
2^n\left(
\ln n-1+
\frac{1}{2n}
\left(
\ln(2\pi n)+\frac{1}{6n}
\right)
\right).
\]
\end{theorem}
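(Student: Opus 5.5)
The upper bound will follow from matchings, and the lower bound from counting pairs of disjoint matchings, with the claim on $F_n(i)$ used to control 2-factors with many components.

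\textbf{Upper bound.} Every 2-factor $U$ of $E^n$ consists of even cycles, since $E^n$ is bipartite. Choosing alternate edges in each cycle splits $U$ into two disjoint perfect matchings $A,B$, and the unordered pair $\{A,B\}$ determines $U=A\cup B$. Fixing one such decomposition for each $U$ gives an injection $F_n\to M_n\times M_n$. Hence $|F_n|\le |M_n|^2$, and the upper bound is exactly twice the upper bound of Theorem~1.

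\textbf{Lower bound, step 1: count ordered pairs.} Let $P$ be the number of ordered pairs $(A,B)$ with $A\in M_n$ and $B\in M_n(A)$. By Theorem~1 and Claim~1,
\[
\ln P\ \ge\ 2^{n-1}(\ln n-1)+2^{n-1}(\ln(n-1)-1)\ \ge\ 2^n(\ln(n-1)-1).
\]
Each $U\in F_n(i)$ arises from exactly $2^i$ ordered pairs, because each of its $i$ cycles can be split into two matchings in two ways. Therefore
\[
P=\sum_{i=1}^{2^{n-2}}2^i|F_n(i)|.
\]

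\textbf{Lower bound, step 2: split at a threshold.} Put $t=\lfloor 3\cdot2^n/\ln n\rfloor$ and split the sum at $t$.
\begin{itemize}
\item For $i\le t$, the terms contribute at most $2^t|F_n|$.
\item For $i>t$, Claim~2 bounds each term by $2^{2^{n-2}}n^{2^n-t}2^{2^n}$, and there are at most $2^{n-2}$ terms. Since $n^{-t}\approx e^{-3\cdot 2^n}$, the logarithm of this tail is at most
\[
2^n\left(\ln n-3+\tfrac54\ln2\right)+O(n).
\]
\end{itemize}
We have $\tfrac54\ln 2<0.87$ and $\ln n-\ln(n-1)\to0$. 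So for large $n$ the tail is at most $P/2$, with an exponential margin.

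\textbf{Lower bound, step 3: conclude.} From step 2, $2^t|F_n|\ge P/2$, so
\[
\ln|F_n|\ \ge\ 2^n(\ln(n-1)-1)-t\ln2-\ln 2\ \ge\ 2^n\Bigl(\ln(n-1)-1-\frac{3\ln2+o(1)}{\ln n}\Bigr).
\]
Since $3\ln 2\approx2.08<4$, this gives the stated bound with room to spare.

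\textbf{Main obstacle: small $n$.} The delicate point is getting an explicit statement valid for every $n\ge2$, rather than only asymptotically. I would first choose the constant in $t$ slightly larger than $3$, for instance $t\approx 2^n\cdot 4/(\ln n\cdot\ln 2)$ rounded down, so that the loss $t\ln 2$ is still at most $2^n\cdot 4/\ln n$ and the tail estimate improves. Then I would check the small cases directly.
\begin{itemize}
\item For $n=2$: $\ln(n-1)=0$ and $4/\ln 2>5$, so the right-hand side is very negative and the bound is trivial.
\item For moderate $n$, say $n\le 20$: the right-hand side is negative whenever $\ln(n-1)-1-4/\ln n<0$, which holds here, so again there is nothing to prove. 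The remaining range is where the tail estimate applies with margin.
\end{itemize}
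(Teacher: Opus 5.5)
Your argument is the paper's own. The upper bound comes from $|F_n|\le|M_n|^2$. The lower bound comes from $\sum_i 2^i|F_n(i)|\ge|M_n|\min_B|M_n(B)|$, split at a threshold of order $2^n/\ln n$, with the tail controlled by Claim~2. The paper takes $I=\lfloor 2^{n+2}/\ln n\rfloor$ and loses $4\ln 2<4$; your $t=\lfloor 3\cdot 2^n/\ln n\rfloor$ loses $3\ln 2$. This core is sound, but your closing paragraph on small $n$ is both wrong and unnecessary.

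It is wrong because $\ln(n-1)-1-4/\ln n$ is negative only for $2\le n\le 13$. At $n=14$ it is about $0.05$ and at $n=20$ about $0.61$. So there is something to prove for $14\le n\le 20$, and ``large $n$'' was never quantified.

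It is unnecessary because your original estimates hold for every $n\ge 2$ once written out. Take logarithms, using $t\ln n\ge 3\cdot 2^n-\ln n$ and at most $2^{n-2}$ tail terms. The requirement that the tail be at most $P/2$ becomes
\[
2^n\Bigl(\tfrac54\ln 2-2+\ln\tfrac{n}{n-1}\Bigr)+(n-1)\ln 2+\ln n\le 0 .
\]
Since $\ln\frac{n}{n-1}\le\ln 2$, the bracket is at most $\frac94\ln 2-2<-0.44$, so the left side is negative for all $n\ge 2$. Absorbing the final $-\ln 2$ into the $4/\ln n$ term needs only $\ln 2\cdot\ln n\le(4-3\ln 2)\,2^n$, which is again true for all $n\ge 2$. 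Replace the case analysis and the altered threshold by these two explicit checks, and the proof is complete.
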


\begin{proof}
\textit{Upper bound.}
Every 2-factor is generated by two disjoint perfect matchings; hence
\[
|F_n|\leq |M_n|^2.
\]
Therefore, the required inequality follows from Theorem~1.

\medskip
\noindent
\textit{Lower bound.}
Each connected component of a 2-factor $U\in F_n(i)$ can be divided
into two vertex-disjoint sets of edges of equal cardinality.
Since each of the two alternating sets of edges of a cycle can be
assigned to either the first or the second perfect matching, every
2-factor $U$ is generated by $2^i$ distinct ordered pairs of perfect
matchings.

The number of ordered pairs of disjoint perfect matchings is at least
$|M_n|m$, where
\[
m=\min_{B\in M_n}|M_n(B)|.
\]
Thus,
\begin{equation}
\sum_{i=1}^{2^{n-2}} |F_n(i)|2^i
\geq
|M_n|m.
\label{7}
\end{equation}

Let
\[
F_n^1=\bigcup_{i=1}^{I}F_n(i),
\qquad
F_n^2=\bigcup_{i=I+1}^{2^{n-2}}F_n(i),
\]
where
\[
I=\left\lfloor\frac{2^{n+2}}{\ln n}\right\rfloor.
\]
Then, by the preceding claim,
\[
|F_n^2|
\leq
2\frac{(2n)^{2^n}}
{n^{2^{n+2}/\ln n}},
\]
that is,
\begin{equation}
\ln|F_n^2|
\leq
\ln2+2^n(\ln n+\ln2-4).
\label{8}
\end{equation}

From \eqref{8}, Theorem~1, and the preceding claim, we obtain
\begin{equation}
|F_n^2|
\leq
\frac{|M_n|m}{2^{1+2^{n-2}}}.
\label{9}
\end{equation}

It follows from \eqref{7} and \eqref{9} that
\[
\sum_{i=1}^{I}|F_n(i)|2^i
\geq
|M_n|m-|F_n^2|2^{2^{n-2}}
\geq
\frac{|M_n|m}{2}.
\]
Consequently,
\[
|F_n|
\geq
|F_n^1|
\geq
\frac{|M_n|m}{2^{I+1}}
\geq
\frac{|M_n|m}
{2^{1+2^{n+2}/\ln n}},
\]
and the lower bound for $\ln|F_n|$ follows from Theorem~1 and the
preceding claim.
\end{proof}

\section{A Lower Bound for the Number of Hamiltonian Cycles in $E^n$}

To establish a lower bound for the number of Hamiltonian cycles in
$E^n$, we shall need some notation.

For an arbitrary set of edges $A$ in $E^n$, let $G(A)$ denote the
subgraph of $E^n$ generated by the set of edges $A$. We say that a
perfect matching $A$ in $E^n$ can be extended to $l$ simple paths
(respectively, to a 2-factor with $l$ connected components) if there
exists a perfect matching $B$ in $E^n$ such that the graph $G(A\cup B)$
consists of $l$ pairwise vertex-disjoint simple paths (respectively,
$G(A\cup B)$ is a 2-factor in the hypercube and
$k(G(A\cup B))=l$). In this case, the graph $G(A\cup B)$ will be called
an extension of the perfect matching $A$ to $l$ simple paths
(respectively, to a 2-factor consisting of $l$ connected components).

For every vector
\[
x=(\alpha_1,\ldots,\alpha_n)\in V(E^n),
\]
define
\[
\varphi_i(x)=(\alpha_1,\ldots,\alpha_n,i),
\qquad i\in\{0,1\}.
\]
For every edge
\[
e=(x_1,x_2)\in E(E^n),
\]
put
\[
\varphi_i(e)=\bigl(\varphi_i(x_1),\varphi_i(x_2)\bigr),
\qquad i\in\{0,1\}.
\]
For an arbitrary set of edges
\[
A=\{e_1,\ldots,e_s\}\subset E(E^n),
\]
put
\[
\varphi_i(A)=
\{\varphi_i(e_1),\ldots,\varphi_i(e_s)\},
\qquad i\in\{0,1\}.
\]

\begin{lemma}
Let $A$ and $B$ be perfect matchings in $E^n$, and suppose that $A$
can be extended to $l$ simple paths. Then the perfect matching
\[
\varphi_0(A)\cup\varphi_1(B)\in M_{n+1}
\]
can be extended to a 2-factor having at most $l$ connected components.
\end{lemma}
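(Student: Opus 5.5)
Let $C$ be a perfect matching in $E^n$ such that $G(A\cup C)$ consists of $l$ vertex-disjoint simple paths; this is what the hypothesis gives us. The plan is to build a perfect matching $D$ of $E^{n+1}$, disjoint from $\varphi_0(A)\cup\varphi_1(B)$, with
\[
D=\varphi_0(C)\cup\{\text{edges of direction } n+1\}\cup \varphi_1(B'),
\]
where $B'$ is a suitable set of edges in the copy $\varphi_1$. In other words, we keep the $l$ paths in layer $0$ and close them up through layer $1$.

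First observe what a path of $G(A\cup C)$ looks like. Since $A$ and $C$ are both perfect matchings and the components are paths rather than cycles, each path must begin and end with an edge of $A$. Indeed, every vertex is covered by $A$, so an endpoint of degree $1$ is incident only to its $A$-edge; this uses $A\cap C=\emptyset$ implicitly. The same reasoning shows that the vertices of degree $1$ are exactly those vertices $x$ whose $A$-edge is also a $C$-edge. Such a double edge is not counted as a path edge, so $x$ has degree $1$ in the union. Let $S$ be the set of these vertices, so $S$ consists precisely of the $2l$ path endpoints, with the convention that an $A$-edge lying in $C$ forms a path of length one with both endpoints in $S$.

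Now define the extension.
\begin{enumerate}
\item Put $\varphi_0(C\setminus A)$ into $D$. This matches every vertex $\varphi_0(x)$ with $x\notin S$.
\item For each $x\in S$, put the vertical edge $(\varphi_0(x),\varphi_1(x))$ into $D$. This edge is not in $\varphi_0(A)\cup\varphi_1(B)$.
\item In layer $1$ it remains to match the vertices $\varphi_1(y)$ with $y\notin S$, using edges that avoid $B$, and to control the components.
\end{enumerate}

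The most natural choice for step 3 would take $D\cap$ layer $1$ to be $\varphi_1(B)$ itself with vertical edges everywhere, but $B$ is already used, so that does not work. The alternative is to use vertical edges at all vertices. Let
\[
D=\varphi_0(C\setminus A)\cup\{(\varphi_0(x),\varphi_1(x)) : x\in V\}.
\]
This is not a matching, since vertices outside $S$ would be covered twice, so adjust: take $D=\varphi_0(C\setminus A)$ together with the vertical edges at $S$ and the vertical edges at the vertices $y\notin S$ used on the layer-$1$ side. This fails for the same reason, because $\varphi_0(y)$ is already covered.

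The correct approach is to include all vertical edges $\{(\varphi_0(x),\varphi_1(x)) : x\in V\}$ and nothing else. This set is a perfect matching of $E^{n+1}$ and is disjoint from $\varphi_0(A)\cup\varphi_1(B)$. The union with $\varphi_0(A)\cup\varphi_1(B)$ is then a 2-factor whose cycles alternate between $A$-edges in layer $0$, vertical edges, and $B$-edges in layer $1$. Its components correspond to components of $G(A\cup B)$, so this only gives at most $k(G(A\cup B))$ components, not $l$. Hence the paths must be used: the main step is to show that $D=\varphi_0(C\setminus A)\cup\{\text{vertical edges at } S\}\cup R$, with $R$ a matching on $\varphi_1(V\setminus S)$ avoiding $\varphi_1(B)$, yields a 2-factor whose components each contain a vertical edge and hence a path.

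Here is the key difficulty. In layer $1$ every vertex has its $B$-edge, so we need $R$ to cover $V\setminus S$, but $R$ must also link up with $S$, which is impossible since $S$ is already covered. I therefore expect the intended choice to be $R=\varphi_1(C)$ restricted suitably. If $B=A$ this recovers the doubled paths, which close into $l$ cycles. The main obstacle is handling general $B$, and I would first try symmetrizing by choosing the roles of $A$ and $B$ so that the paths close through vertical edges at $S$.
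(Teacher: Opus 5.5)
Your proposal stops short of a proof. It ends by conceding that general $B$ is the main obstacle, and the layered construction you settle on cannot handle it. Suppose layer $0$ carries $\varphi_0(A)\cup\varphi_0(C\setminus A)$ and vertical edges are used only at $S$. Then every alternating $\varphi_1(B)$/$R$ cycle inside layer $1$ (which necessarily avoids $\varphi_1(S)$) is a separate component of the 2-factor, and nothing bounds how many there are. Controlling them would need strong information about how $B$ sits relative to $S$ (e.g.\ that $B$ itself extends to paths with endpoint set $S$). You also do not show that a suitable $R$ disjoint from $\varphi_1(B)$ exists.

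Your description of $S$ is also off. If $C$ is literally a perfect matching, then $A(x)=C(x)=y$ forces $y\in S$ as well, so every ``path'' is a single edge and $A=C$. In the intended reading (compare the matching $B'\subset B\cup\{e\}$ in the proof of Theorem~4), $C$ is an arbitrary matching. The only property of $C$ you need is $k(G(A\cup C))\le l$.

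The missing idea is to keep the all-vertical matching $Y_0=\{(\varphi_0(x),\varphi_1(x)) : x\in V(E^n)\}$ that you found and then discarded. It gives a 2-factor with exactly $k=k(G(A\cup B))$ components, and you then lower this number by local switches:
\begin{itemize}
\item Take an edge $e=(x,y)\in C$ joining two different components of $G(A\cup B)$. Replace the vertical edges at $x$ and $y$ by $\varphi_0(e)$ and $\varphi_1(e)$. These avoid $\varphi_0(A)\cup\varphi_1(B)$ because $e\notin A\cup B$. If the two vertical edges lie on different cycles, the switch fuses those cycles into one.
\item Choose the switching edges $e_1,\ldots,e_p$ as representatives of a spanning forest of an auxiliary graph. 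Its vertices are the components of $G(A\cup B)$, and its edges come from $C$.
\item Because $C$ is a matching, the switches touch disjoint pairs of vertical edges, so they do not interfere.
\item At step $i+1$ the two vertical edges must lie on different cycles. Otherwise, projecting to $E^n$ would put $x$ and $y$ in the same component of $G(A\cup B\cup\{e_1,\ldots,e_i\})$, contradicting the choice of a forest.
\end{itemize}
After $p$ switches the number of components is
\[
k-p=k(G(A\cup B\cup C))\le k(G(A\cup C))=l .
\]
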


\begin{proof}
We construct in $E^{n+1}$ a perfect matching which extends the perfect
matching
\[
\varphi_0(A)\cup\varphi_1(B)
\]
to a 2-factor having at most $l$ connected components.

Let
\[
C=A\cup B
\]
and
\[
G(C)=\Phi_1\cup\cdots\cup\Phi_k,
\]
where $\Phi_1,\ldots,\Phi_k$ are the connected components of $G(C)$.

By the assumption of the lemma, there exists a perfect matching $D$
in $E^n$ such that $G(A\cup D)$ has $l$ connected components.
Since $A\subset C$, the graph $G(C\cup D)$ has $m$ connected components,
where
\[
m\leq l.
\]

Consider the auxiliary graph $\Delta$ whose vertices are
$\Phi_1,\ldots,\Phi_k$. Two vertices $\Phi_i$ and $\Phi_j$, $i\ne j$,
are joined by an edge if there exists an edge
\[
(x_i,x_j)\in D
\]
such that
\[
x_i\in V(\Phi_i),\qquad x_j\in V(\Phi_j).
\]
For each edge
\[
\varepsilon=(\Phi_i,\Phi_j)
\]
of $\Delta$, choose one edge
\[
e(\varepsilon)=(x_i,x_j)
\]
having this property.

Since
\[
V(G(C))=V(E^n),
\]
the graph $\Delta$ has $m$ connected components, just as
$G(C\cup D)$ does.

Let $\Theta$ be a spanning forest of $\Delta$, and put
\[
T=\{e(\varepsilon)\in D:\varepsilon\in E(\Theta)\}.
\]
The forest $\Theta$ consists of $m$ trees on $k$ vertices. Therefore
\[
|T|=|E(\Theta)|=k-m=p.
\]
Order the elements of $E(\Theta)$ arbitrarily:
\[
E(\Theta)=\{\varepsilon_1,\ldots,\varepsilon_p\}.
\]
By the definition of a spanning forest, the graph
\[
\bigl(V(\Delta),\{\varepsilon_1,\ldots,\varepsilon_i\}\bigr)
\]
has $k-i$ connected components for $0\leq i\leq p$. Consequently,
\begin{equation}
k\bigl(G(C\cup\{e_1,\ldots,e_i\})\bigr)=k-i,
\qquad 0\leq i\leq p,
\label{l1n1}
\end{equation}
where
\[
e_i=e(\varepsilon_i).
\]

Let
\begin{equation}
e_i=(x_0^i,x_1^i),
\qquad 1\leq i\leq p.
\label{l1n2}
\end{equation}
Notice that all the vertices
\[
x_0^i,\ x_1^i,\qquad 1\leq i\leq p,
\]
are distinct, since $T$ is contained in the perfect matching $D$.

We now construct a sequence of perfect matchings in $E^{n+1}$:
\[
Y_0=
\{(\varphi_0(x),\varphi_1(x)):x\in V(E^n)\},
\]
and, for $1\leq i\leq p$,
\[
Y_i=
Y_{i-1}\cup\{\varphi_0(e_i),\varphi_1(e_i)\}
\setminus
\left\{
(\varphi_0(x_0^i),\varphi_1(x_0^i)),
(\varphi_0(x_1^i),\varphi_1(x_1^i))
\right\}.
\]

For an arbitrary set of edges $I$ in $E^{n+1}$, define
\[
\pi_n(I)=
\left\{
e\in E(E^n):
\{\varphi_0(e),\varphi_1(e)\}\cap I\neq\varnothing
\right\},
\]
the projection of $I$ in the $(n+1)$-st direction.

Let
\[
Y=\varphi_0(A)\cup\varphi_1(B).
\]
By the definition of $T$,
\[
T\cap(A\cup B)=\varnothing.
\]
Therefore, for every $i$, $0\leq i\leq p$,
\[
Y_i\cap Y=\varnothing.
\]
Moreover, by construction,
\begin{equation}
\pi_n(Y_i\cup Y)
=
C\cup\{e_1,\ldots,e_i\}.
\label{l1n3}
\end{equation}

We prove by induction that the 2-factor $G(Y_i\cup Y)$ has $k-i$
connected components for $0\leq i\leq p$.

Clearly,
\[
k(G(Y_0\cup Y))=k(G(A\cup B))=k.
\]

Suppose that the assertion holds for some $i$, $0\leq i<p$. Consider
the edges
\[
a=
(\varphi_0(x_0^{i+1}),\varphi_1(x_0^{i+1}))
\]
and
\[
b=
(\varphi_0(x_1^{i+1}),\varphi_1(x_1^{i+1})).
\]
By construction, $a,b\in Y_i$.

Suppose that $a$ and $b$ belong to the same connected component of
$G(Y_i\cup Y)$. Then the vertices $x_0^{i+1}$ and $x_1^{i+1}$ belong
to the same connected component of
\[
G\bigl(\pi_n(Y_i\cup Y)\bigr).
\]
It follows from \eqref{l1n2} and \eqref{l1n3} that
\[
k\bigl(G(C\cup\{e_1,\ldots,e_i\})\bigr)
=
k\bigl(G(C\cup\{e_1,\ldots,e_{i+1}\})\bigr),
\]
which contradicts \eqref{l1n1}.

Therefore, the edges $a$ and $b$ belong to different connected
components $\Psi_1$ and $\Psi_2$ of $G(Y_i\cup Y)$. Since
$G(Y_i\cup Y)$ is a 2-factor, $\Psi_1$ and $\Psi_2$ are simple cycles.
Removing the edges $a$ and $b$ and adding the edges
\[
\varphi_0(e_{i+1}),\qquad \varphi_1(e_{i+1})
\]
joins the two cycles $\Psi_1$ and $\Psi_2$ into a single cycle.
Consequently,
\[
k(G(Y_{i+1}\cup Y))
=
k(G(Y_i\cup Y))-1
=
k-i-1.
\]

Thus
\[
k(G(Y_p\cup Y))=k-p=m\leq l.
\]
Hence $Y_p$ is a perfect matching extending
$\varphi_0(A)\cup\varphi_1(B)$ to a 2-factor with at most $l$
connected components. The lemma is proved.
\end{proof}

Denote by $M_n^*$ the set of perfect matchings in $E^n$ that can be
extended to a Hamiltonian cycle in $E^n$, and denote by $H_n$ the set
of Hamiltonian cycles in $E^n$.

The case $l=1$ of Lemma~1 gives the following corollary.

\begin{corollary}
If $A\in M_n^*$ and $B\in M_n$, then
\[
\varphi_0(A)\cup\varphi_1(B)\in M_{n+1}^*.
\]
\end{corollary}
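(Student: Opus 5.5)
This follows from Lemma~1 with $l=1$; the only work is matching the hypotheses. Let $A\in M_n^*$. By the definition of $M_n^*$ there is a perfect matching $D$ in $E^n$ such that $G(A\cup D)$ is a Hamiltonian cycle of $E^n$. Lemma~1, however, assumes that $A$ can be extended to $l$ simple \emph{paths}, so I first convert the cycle into a path.

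Here the paper's two notions of extension seem to part ways. A Hamiltonian cycle is not literally a simple path, so I would read the case $l=1$ as applying to the Hamiltonian-cycle extension. The proof of Lemma~1 uses only two facts: that a perfect matching $D$ exists, and that $G(A\cup D)$, hence $G(C\cup D)$ with $C=A\cup B$, has at most $l$ connected components. If $G(A\cup D)$ is a Hamiltonian cycle, it is connected and spans $V(E^n)$. Therefore $G(C\cup D)$ is connected as well, so $m=1$.

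I then rerun the construction of Lemma~1 with this $D$. Decompose $G(C)=\Phi_1\cup\cdots\cup\Phi_k$ into its connected components. Build the auxiliary graph $\Delta$, which is connected, and take a spanning tree $\Theta$ of it with $p=k-1$ edges $e_1,\ldots,e_p\in D$. Start from
\[
Y_0=\{(\varphi_0(x),\varphi_1(x)):x\in V(E^n)\}.
\]
At each step, trade the two vertical edges at the endpoints of $e_{i+1}$ for the copies $\varphi_0(e_{i+1})$ and $\varphi_1(e_{i+1})$. The inductive argument of Lemma~1 shows that each step merges two cycles of the 2-factor $G(Y_i\cup Y)$, where
\[
Y=\varphi_0(A)\cup\varphi_1(B).
\]
The final $Y_p$ is therefore a perfect matching disjoint from $Y$ for which $G(Y_p\cup Y)$ is a 2-factor with a single component, that is, a Hamiltonian cycle of $E^{n+1}$.

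It remains to check that $\varphi_0(A)\cup\varphi_1(B)$ is a perfect matching of $E^{n+1}$. This holds because $\varphi_0(A)$ covers the layer $x_{n+1}=0$ and $\varphi_1(B)$ covers the layer $x_{n+1}=1$. Together with the Hamiltonian extension just built, this gives $\varphi_0(A)\cup\varphi_1(B)\in M_{n+1}^*$.

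The main obstacle is the formal mismatch between ``extended to $1$ simple path'' and ``extended to a Hamiltonian cycle''. I resolve it by noting that only connectivity of $G(A\cup D)$ is used, in the step $m\le l$.

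As an alternative, one could delete a single edge of $D$ from the Hamiltonian cycle to obtain a Hamiltonian path. Strictly speaking this leaves an \emph{imperfect} matching, but the proof of Lemma~1 only needs the tree edges $T\subset D$ to be pairwise disjoint and the component count. So the argument goes through either way.
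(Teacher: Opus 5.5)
Your proposal is correct and follows the paper's own argument, which just cites the case $l=1$ of Lemma~1. Your point about the mismatch in definitions is also right, and the paper leaves it implicit: if $B$ must be a perfect matching, then $G(A\cup B)$ can be a union of paths only when $A=B$, so Lemma~1 has to be read with $D$ an arbitrary matching (as in the proof of Theorem~4). Either of your fixes makes the application rigorous: observing that only the component count and the matching property of $D$ are used, or deleting one edge of $D$ to obtain a Hamiltonian path.
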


\begin{theorem}
For $n\geq3$,
\[
2^{n-1}
\left(
\ln n-1-\frac4n-\frac{\ln n}{2^{n/2-2}}
\right)
\leq
\ln|H_n|
\]
and
\[
\ln|H_n|
\leq
2^n
\left(
\ln n-1+
\frac{1}{2n}
\left(\ln(2\pi n)+\frac1{6n}\right)
\right).
\]
\end{theorem}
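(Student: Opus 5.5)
The upper bound needs no new work. Every Hamiltonian cycle is a 2-factor, so $|H_n|\le |F_n|$, and the upper estimate for $\ln|H_n|$ is exactly the upper estimate of Theorem~2.

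For the lower bound I would count Hamiltonian cycles through $M_n^*$, using the Corollary to Lemma~1 as a recursive engine.

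\emph{Step 1.} Compare $|H_n|$ with $|M_n^*|$. Every $A\in M_n^*$ lies in at least one Hamiltonian cycle. A Hamiltonian cycle has even length $2^n$, so it contains exactly two perfect matchings (its two alternating edge classes). Hence $|M_n^*|\le 2|H_n|$, i.e. $\ln|H_n|\ge \ln|M_n^*|-\ln 2$.

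\emph{Step 2.} Obtain a recursion. The map $(A,B)\mapsto \varphi_0(A)\cup\varphi_1(B)$ is injective on $M_{n-1}^*\times M_{n-1}$, because $A$ and $B$ are recovered by restricting to the subcubes $x_n=0$ and $x_n=1$. By the Corollary its image lies in $M_n^*$, so
\[
|M_n^*|\ \ge\ |M_{n-1}^*|\,|M_{n-1}|.
\]
Iterating down to the base $n=2$ gives $|M_n^*|\ge |M_2^*|\prod_{j=2}^{n-1}|M_j|$. Here $E^2$ is a $4$-cycle, so $|M_2^*|=2$. Taking logarithms and applying the lower bound of Theorem~1 gives
\[
\ln|H_n|\ \ge\ \sum_{j=2}^{n-1}2^{j-1}(\ln j-1).
\]
The $\ln 2$ from the base cancels the $-\ln 2$ from Step~1.

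\emph{Step 3.} Estimate the sum; this is the only delicate part. The "$-1$" terms contribute $-(2^{n-1}-2)\ge -2^{n-1}$, which is fine. For the $\ln j$ terms, write $j=n-t$ and split the range at $j\approx n/2$.

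For $j\ge n/2$, use $\ln(n-t)\ge \ln n-\frac{t}{n-t}\ge \ln n-\frac{2t}{n}$. The loss is then at most
\[
\frac{2}{n}\sum_{t\ge1}t\,2^{n-1-t}=\frac{2}{n}\,2^{n-1}\cdot 2=2^{n-1}\cdot\frac4n,
\]
which produces the $-\frac4n$ term. The coefficient of $\ln n$ collected over this range is $\sum_{n/2\le j\le n-1}2^{j-1}\ge 2^{n-1}-2^{n/2}$.

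For $j<n/2$, simply drop the nonnegative terms $2^{j-1}\ln j$. The missing weight $2^{n/2}\ln n$ is at most $2^{n-1}\cdot\frac{\ln n}{2^{n/2-2}}$, which accounts for the last correction term.

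Collecting these gives the stated lower bound. I expect the main obstacle to be this bookkeeping: checking the rounding of $n/2$ for odd $n$ and the small cases $n\ge3$ so the constants come out exactly as stated. The combinatorial content is entirely in the Corollary.
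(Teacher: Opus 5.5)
Your proposal is correct and follows the paper's proof essentially step for step: the upper bound via $|H_n|\le|F_n|$ and Theorem~2, the lower bound via $|H_n|\ge|M_n^*|/2$ with the Corollary iterated to give $\ln|M_n^*|\ge\sum_{i=2}^{n-1}\ln|M_i|$, and the same split of $\sum 2^{i-1}\ln i$ at $i\approx n/2$ producing the $4/n$ and $\ln n/2^{n/2-2}$ corrections. Your injectivity check and your use of $|M_2^*|=2$ to absorb the $-\ln 2$ (which the paper drops silently) make the argument more rigorous. Using $\ln(1-x)\ge -x/(1-x)\ge -2x$ for $x\le 1/2$ in place of inequality~(6) changes nothing essential.
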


\begin{proof}
Since
\[
|H_n|\leq |F_n|,
\]
the upper bound for $|H_n|$ follows from Theorem~2.

Since
\[
|H_n|\geq \frac{|M_n^*|}{2},
\]
to prove the lower bound for $|H_n|$ it is sufficient to establish,
for $n\geq3$,
\begin{equation}
\ln|M_n^*|
\geq
2^{n-1}
\left(
\ln n-1-\frac4n-\frac{\ln n}{2^{n/2-2}}
\right).
\label{t1n1}
\end{equation}

By Theorem~1 and Corollary~1, for $n\geq3$ we have
\begin{equation}
\ln|M_n^*|
\geq
\sum_{i=2}^{n-1}\ln|M_i|
\geq
\sum_{i=2}^{n-1}2^{i-1}(\ln i-1).
\label{t1n2}
\end{equation}

We estimate the sum
\[
\sum_{i=2}^{n-1}2^{i-1}\ln i
\]
from above. By inequality~(6) and the identity
\[
\sum_{j=1}^{\infty}\frac{j}{2^j}=2,
\]
for $n\geq3$ we obtain
\[
\begin{aligned}
\sum_{i=2}^{n-1}2^{i-1}\ln i
&\geq
\sum_{i=\lceil n/2\rceil}^{n-1}
2^{i-1}
\left(
\ln n+\ln\left(1-\frac{n-i}{n}\right)
\right)\\
&\geq
\left(2^{n-1}-2^{\lceil n/2\rceil}\right)\ln n
-
2^{n-1}
\sum_{i=\lceil n/2\rceil}^{n-1}
2^{i-n}
\left(
\frac{n-i}{n}
+
\left(\frac{n-i}{n}\right)^2
\right)\\
&=
\left(2^{n-1}-2^{\lceil n/2\rceil}\right)\ln n
-
2^{n-1}
\sum_{j=1}^{n-\lceil n/2\rceil}
2^{-j}
\left(
\frac{j}{n}+\left(\frac{j}{n}\right)^2
\right)\\
&\geq
\left(2^{n-1}-2^{\lceil n/2\rceil}\right)\ln n
-
\frac{2^{n-1}}{n}
\sum_{j=1}^{\infty}\frac{2j}{2^j}\\
&\geq
2^{n-1}
\left(
\ln n-1-\frac4n-\frac{\ln n}{2^{n/2-2}}
\right).
\end{aligned}
\]
Combining this estimate with \eqref{t1n2} gives \eqref{t1n1}.
Therefore the lower bound in the theorem follows, and the theorem is
proved.
\end{proof}

\section{Extending a Perfect Matching to a Hamiltonian Cycle in $E^n$}

Denote by $M_n^k$, where $M_n^k\subset M_n$, the set of perfect
matchings in each of which the edges belong to at most $k$ different
directions. Clearly, $M_n=M_n^n$. The following statement holds.

\begin{theorem}
For every integer $k>0$, there exists an integer $n_0\geq k$ such that,
for all integers $n\geq n_0$,
\[
M_n^k\subset M_n^*.
\]
\end{theorem}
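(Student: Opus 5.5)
The plan is to separate an inductive step, which follows directly from Corollary~1, from a base case, which carries all of the difficulty. Let $A\in M_n^k$ and let $S$, $|S|\le k$, be the set of directions used by $A$. If $n>k$, there is a direction $j\notin S$. By symmetry of $E^n$ we may take $j=n$. Since no edge of $A$ goes in direction $n$, we can write $A=\varphi_0(A_0)\cup\varphi_1(A_1)$ with $A_0,A_1\in M_{n-1}^k$. Suppose every matching of $M_{n-1}^k$ lies in $M_{n-1}^*$. Then $A_0\in M_{n-1}^*$, and Corollary~1 gives $A\in M_n^*$. So if we find a single $n_0\ge k$ with $M_{n_0}^k\subset M_{n_0}^*$, induction on $n$ yields $M_n^k\subset M_n^*$ for all $n\ge n_0$. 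Everything reduces to showing that such an $n_0$ exists.

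For the base case I would prove a quantitative statement by induction on $d\ge 0$. Let $c_d$ be the maximum, over $A\in M_{k+d}^k$, of the least number of paths (or cycles) to which $A$ can be extended. The goal is to show that $c_d$ eventually equals $1$. The starting bound is $c_0\le 2^{k-2}$, since by Claim~1 any perfect matching of $E^k$ has a disjoint perfect matching, so it extends to some 2-factor. Lemma~1 says that if $A_0$ extends to $l$ paths, then $\varphi_0(A_0)\cup\varphi_1(A_1)$ extends to a 2-factor with at most $l$ components. Hence the number of components does not increase when we pass to a new free direction. What remains is to show that it strictly decreases, say from $l$ to at most $\lceil l/2\rceil$, after a bounded number of extra dimensions.

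To force the decrease, I would take $d'$ fresh directions, for example $d'=2$. Write $A$ on $E^{k+d+d'}$ as a union of fiber matchings $A_y$, $y\in E^{d'}$, each in $M_{k+d}^k$, with extensions $D_y$ having $\le l$ cycles each. Then I would merge cycles across fibers by the same switching move used in the proof of Lemma~1. Take an edge $(u,v)\in D_y$ lying on a cycle $\Psi$ of fiber $y$, and suppose $(u',v')$, its translate in a neighbouring fiber $y'$, is an extension edge on a different cycle $\Psi'$. Delete both edges and insert the two cross edges $(u,u')$ and $(v,v')$; this joins $\Psi$ and $\Psi'$. As in Lemma~1, I would build an auxiliary graph on all cycles of all fibers and take a spanning forest. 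The aim is for its edges to use pairwise disjoint vertices, so that the switches can be performed one after another, each lowering the number of components by exactly one.

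The main obstacle is this merging step when the fiber matchings $A_y$ differ. The translates of a $D_y$-edge need not belong to $D_{y'}$, so the switching pairs may not exist. I would handle this in two ways. First, choose $D_y$ as a fixed function of $A_y$, so that fibers carrying equal matchings have identical, translated cycle structures. Second, pass through an intermediate layer: use one fresh direction to pair each fiber with its own translate, which by Lemma~1 costs nothing in components. Identical copies then always admit switchings along every $D$-edge. With enough fresh directions, each cycle can be glued to a cycle of another fiber, so the component count roughly halves per round. After about $\log_2 2^{k-2}=k-2$ rounds we reach a Hamiltonian cycle, which would give an explicit $n_0$ of order a constant multiple of $k$.
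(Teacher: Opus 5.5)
Your reduction to a single base dimension is correct and is exactly how the paper concludes: pick a direction unused by $A$, write $A=\varphi_0(A_0)\cup\varphi_1(A_1)$, and apply Corollary~1.

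The gap is in the base case: you never prove that the number of components ever decreases. The cross-fiber switching is only a plan, and the proposed repairs do not work. The fibers $A_y$ of a given $A$ are arbitrary elements of $M_{k+d}^k$, so you cannot arrange for them to be translates of one another. Worse, suppose fibers do carry identical translated extensions, as in your first repair. Then the translate of a $D_y$-edge on a cycle $\Psi$ lies on the translate of $\Psi$, so every switch you allow merges a cycle only with copies of itself. The auxiliary graph then has at least $l$ components, and the count cannot drop below $l$. Hence the claimed halving per round, and the resulting $n_0=O(k)$, are unsupported. You would also still have to make the switching edges of the spanning forest pairwise vertex-disjoint, which you do not address.

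The missing idea is simple, and it comes from reading Lemma~1 more carefully than ``components do not increase''. Lemma~1 takes an extension to $l$ \emph{paths} as input and returns a 2-factor with at most $l$ \emph{cycles}, and paths are cheaper than cycles.

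Suppose $A_0$ extends via $B$ to a 2-factor with $m\ge 2$ cycles. Connectivity of the cube gives an edge $e=(x_1,x_2)$ between two different cycles. Delete the $B$-edges at $x_1$ and $x_2$, add $e$, and delete one $B$-edge from every other cycle. The result is a matching $B'\subset B\cup\{e\}$ with $G(A_0\cup B')$ consisting of $m-1$ simple paths. $B'$ is not perfect, but the proof of Lemma~1 only uses that $D$ is a matching. So Lemma~1 extends $\varphi_0(A_0)\cup\varphi_1(A_1)$ to a 2-factor with at most $m-1$ cycles.

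Thus each extra free direction lowers the bound by one, starting from $2^{k-2}$ in $E^k$. This gives $n_0=k+2^{k-2}-1$, which is exponential in $k$ rather than linear, but fully proved.
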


\begin{proof}
Since the length of the shortest simple cycle in $E^k$ is $4$, any two
disjoint perfect matchings $A$ and $B$ in $M_k$ form a 2-factor
$G(A\cup B)$ having at most $2^{k-2}$ connected components.

We prove by induction that, for every $i$ satisfying
\[
0\leq i<2^{k-2}-1,
\]
every perfect matching in $M_{k+i}^k$ can be extended to a 2-factor
having at most $2^{k-2}-i$ connected components.

Indeed, suppose that the induction hypothesis holds for some
\[
0\leq i<2^{k-2}-2.
\]
Consider a perfect matching
$A\in M_{k+i+1}^k$. Without loss of generality, we may assume that
$A$ contains no edges in the $(k+i+1)$-st direction. Consequently,
$A$ can be represented as
\[
A=\varphi_0(A_0)\cup\varphi_1(A_1),
\]
where $A_0,A_1\in M_{k+i}^k$.

By the induction hypothesis, there exists $B\in M_{k+i}$ such that
$A_0\cap B=\varnothing$ and the 2-factor $G(A_0\cup B)$ has $m$
connected components, where
\begin{equation}
m\leq 2^{k-2}-i.
\label{t2n1}
\end{equation}

Since $E^{k+i}$ is connected, there exists an edge
\[
e=(x_1,x_2)\in E(E^{k+i})
\]
such that $x_1$ and $x_2$ belong to different connected components of
$G(A_0\cup B)$.

Remove from the 2-factor $G(A_0\cup B)$ the two edges belonging to $B$
that are incident with $x_1$ and $x_2$, and add the edge $e$.
In each of the remaining cycles of the 2-factor $G(A_0\cup B)$,
remove one arbitrary edge belonging to $B$.

As a result, we obtain a graph
\[
G=G(A_0\cup B'),
\]
where the matching $B'$ is contained in $B\cup\{e\}$. The graph
$G(A_0\cup B')$ consists of $m-1$ simple paths.

It follows from \eqref{t2n1} and Lemma~1 that the perfect matching $A$
can be extended to a 2-factor having at most
\[
2^{k-2}-i-1
\]
connected components.

Thus, when
\[
i=2^{k-2}-1,
\]
an arbitrary perfect matching in $M_{k+i}^k$ can be extended to a
Hamiltonian cycle. Hence
\[
M_{n_0}^k\subset M_{n_0}^*,
\qquad
n_0=k+2^{k-2}-1.
\]
Moreover, Corollary~1 implies that
\[
M_n^k\subset M_n^*
\]
for all $n\geq n_0$.

This proves Theorem~4.
\end{proof}

Thus, every perfect matching whose edges belong to at most $k$
directions can be extended to a Hamiltonian cycle in $E^n$ for
$n\geq n_0(k)$.

\end{document}